\newtheorem{theo}{Theorem}
\newtheorem{prop}{Proposition}[section]
\newtheorem{lem}{Lemma}[section]
\theoremstyle{definition}
\newtheorem{defi}{Definition}[section]
\newtheorem{rem}{Remark}
\newtheorem{example}{Example}
\numberwithin{equation}{section}
\newcommand{\C}{\mathbb C} 
\newcommand{\R}{\mathbb R} 
\newcommand{\N}{\mathbb N} 
\newcommand{\B}{\mathbb B} 
\newcommand{\dis}{\displaystyle}
\begin{document}

\begin{abstract}
We define an analogue of the Baernstein star function for a meromorphic function $f$ in several complex variables. This function is subharmonic on the upper half-plane and encodes some of the main functionals attached to $f$.
We then characterize meromorphic functions admitting a harmonic star function. 
\end{abstract} 

\title[The star function for meromorphic functions of 
several complex variables]
{The star function for meromorphic functions of \\ several complex variables} 

\author{Faruk Abi-Khuzam, Florian Bertrand and Giuseppe Della Sala}

\subjclass[2010]{32A20, 32A22, 32A60, 32A30, 30D35}
\keywords{}
\thanks{}
\maketitle 

\section*{Introduction}

One aspect of the classical theory of meromorphic functions of finite order,
is the search for sharp asymptotic inequalities between certain functionals
associated with a given function $f$. Such functionals include, among
others, counting functions for $a$-values, the Nevanlinna characteristic or the  maximum modulus, denoted respectively by%
\[
N(r,a;f),T(r,f), M(r;f).
\]
There is a vast body of literature on those inequalities  notably for
functions of order less than one. We note in particular a unified approach
to some of those inequalties that has been presented by  J. Rossi  and A. Weitsmann in \cite{ro-we}  using  the theory of the Phragm\'en-Lindel\"of indicator along with the
Baernstein star function of $f$. The star function, denoted by $T^*(re^{i\theta
},f),$  was introduced by A. Baernstein \cite{ba1,ba2}, and used successfully by him in 
several problems beginning with the settlement of  Edrei's spread conjecture \cite{ed}. 
The crowning achievement in the use of the star function by A. Baernstein,
was the proof that the Koebe function is extremal for the $L^{p}$ norms of
all functions in the standard class $S$. A key ingredient in  Baernstein
proofs was the fact that while the star function of a typical meromorphic function $f$ is always
subharmonic in the upper half-plane, that of the extremal function is
harmonic.

The problems and techniques above have been considered and extended for subharmonic functions in $\R^{n}$ by many authors (see for instance \cite{ba-ta,ra-sh,ro-we,ha-ke}). In 
particular,  A. Baernstein and B. A. Taylor \cite{ba-ta} introduced an analogue of the star function in higher dimension. However, 
although such approach is rather natural for the study of subharmonic or $\delta$-subharmonic functions in $\R^n$, it does not seem that the star function introduced in  \cite{ba-ta} 
is well adapted to the distribution theory of entire, meromorphic or plurisubharmonic functions in several complex variables.   
 In this respect, the first author
had already suggested at least two possible definitions for a general star
function \cite{ab2} in several complex variables. 
In the present work, we follow one of those approaches and introduce the star function of a meromorphic function $F$ in $\C^n$ by averaging over the unit sphere the star functions $T^*(.,F_\zeta)$ of its \lq \lq slices\rq \rq \ $F_{\zeta }:\C \to \C$ defined by $F_{\zeta }(z)=F(z\zeta)$. Our first concern is to study the continuous dependence of $T^*(.,F_\zeta)$ on the parameter $\zeta$ (Theorem \ref{theocon}). In analogy with \cite{ab}, where the first author characterized all meromorphic functions admitting a harmonic star function in one variable (see also \cite{es-sh}), we provide a similar characterization in several complex variables  (Theorem \ref{theohar}). As might be expected, new elements enter the picture in the several variables case. In particular, it connects with the problem of determining a meromorphic function $F: \C^n \to \mathbb{C}$ from the knowledge of zero sets of its "slice" functions. We hope that our approach will allow to extend to several complex variables some of the known  inequalities in $\C$  and carry over a program similar to the one variable case. This will be  the focus of forthcoming work. 

The paper is organized as follows. In Section 1, we study the continuity on the unit sphere of $T^*(.,F_\zeta)$ with respect to 
$\zeta$ which allows us, in particular, to define an analogue of the Baernstein star function for meromorphic functions in several complex variables. In Section 2, we characterize meromorphic functions admitting a harmonic star function. 

\section{Star function for meromorphic function of several variables}
We denote by $\Delta_r=\{z\in \C  \ | \ |z|<r\}$ the disc in $\C$ centered atthe origin and of radius $r>0$. 
We denote the upper half-plane by $\mathbb{H}=\{z\in \C \ | \  \Im m z > 0\}$
and by $\mathbb{S}^{2n-1}$ the unit sphere in $\C^n$. 

\vspace{0.5cm}

Consider a meromorphic function $F:\C^n \to \C$ such that $F(0)=1$. Recall that $F$ can be written as $\displaystyle F=\frac{G}{H}$ where $G$ and $H$ are two coprime entire functions (see for instance Theorem 6.5.11 in \cite{kr}).  
 Define for $\zeta \in \mathbb{S}^{2n-1}$, the trace of $F$ on the complex line $\{z\zeta \ | \ z\in \C\}$, $ F_\zeta: \C \to \C$  by 
$$F_\zeta(z)=F(z\zeta).$$ 
For $t>0$ and $a \in \C\cup \{\infty\}$,  let $n(t, a; F_\zeta)$ be the number of $a$-points of $F_\zeta$ in the closed disc $\overline{\Delta_t}$.
For $a \in \{0,\infty\}$ and $r\geq 0$, the counting function of  $F_\zeta$ is defined by  
$$N(r, a; F_\zeta)=\int_0^r \frac{n(t, a; F_\zeta)}{t}dt.$$
Note that according to Jensen's formula, one has 
\begin{equation}\label{eqjen}
N(r, 0; F_\zeta)-N(r, \infty; F_\zeta)=\frac{1}{2\pi}\int_{-\pi}^{\pi} \log |F(re^{i\theta}\zeta)| d\theta.
\end{equation}
 
 For $\zeta \in  \mathbb{S}^{2n-1}$, we consider the  Baernstein star function associated to
$ F_\zeta: \C \to \C$ (see \cite{ba1,ba2}) 
$$T^*(re^{i\theta},F_\zeta)=\sup_E \ \frac{1}{2\pi}\int_E \log |F(re^{ix}\zeta)|dx+N(r,\infty;F_\zeta)$$   
where $re^{i\theta} \in \overline{\mathbb{H}}\setminus\{0\}$ and where the $\sup$ is taken over all sets $E \subset [-\pi,\pi]$ of Lebesgue measure $|E|=2\theta$.   We will write 
$$F_\zeta^*(re^{i\theta})=\sup_E \ \frac{1}{2\pi}\int_E \log |F(re^{ix}\zeta)|dx.$$
Note that 
$$T^*(r,F_\zeta)=N(r, \infty; F_\zeta)$$ 
and that Jensen's formula (\ref{eqjen}) implies 
$$T^*(-r,F_\zeta)=N(r, 0; F_\zeta).$$ 
The fundamental result of A. Baernstein states that $T^*(.,F_\zeta)$ is subharmonic on 
$\mathbb{H}$ and continuous on $\overline{\mathbb{H}}\setminus\{0\}$ \cite{ba1,ba2}; moreover, under the
assumption that  $F_{\zeta}(0)=1$,   $T^*(.,F_\zeta)$ extends continuously  on $\overline{\mathbb{H}}$. Our first main result is that for a 
fixed $re^{i\theta}$, $r>0$, $\theta \in [0,\pi)$ the map  $\zeta \mapsto T^*(re^{i\theta},F_\zeta)$
is continuous a.e. on the sphere $\mathbb{S}^{2n-1}$: 

\begin{theo}\label{theocon}
Let  $\displaystyle F=\frac{G}{H}:\C^n \to \C$ be a meromorphic function satisfying $F(0)=1$, where $G$ and $H$ are two coprime entire functions.
Define the following set
$$X=\{ \zeta \in \mathbb{S}^{2n-1}\ | \ G_\zeta^{-1}(0) \cap H_\zeta^{-1}(0)\neq \emptyset\}.$$
Then
\begin{enumerate}[i.]
\item The set $X$ has Lebesgue measure zero on $\mathbb{S}^{2n-1}$. 
\item For a fixed $re^{i\theta}$, $r>0$, $\theta \in [0,\pi)$, the function $\zeta \mapsto F_\zeta^*(re^{i\theta})$ is continuous on 
${\mathbb{S}^{2n-1}} \setminus X$.
\item For a fixed $r>0$ the function $\zeta \mapsto N(r, \infty; F_\zeta)$ is continuous on ${\mathbb{S}^{2n-1}} \setminus X$.  
\end{enumerate}
\end{theo}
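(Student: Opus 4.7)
The plan is to address the three parts in turn.

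For (i), since $G$ and $H$ are coprime entire functions on $\C^n$ with $n \geq 2$, their common zero locus $V := G^{-1}(0) \cap H^{-1}(0)$ is an analytic subvariety of complex codimension at least two: any codimension-one component of $V$ would be contained in a common divisor of $G$ and $H$, contradicting coprimality. Hence $V$ has real dimension at most $2n-4$. Now $X$ is precisely the preimage, under the Hopf fibration $p:\mathbb{S}^{2n-1}\to\CP^{n-1}$, of the image $\pi(V\setminus\{0\})\subset\CP^{n-1}$, where $\pi:\C^n\setminus\{0\}\to\CP^{n-1}$ is the canonical projection. Since projections do not increase complex dimension, $\pi(V\setminus\{0\})$ has real dimension at most $2n-4$; the $S^1$-fibers of $p$ contribute one further real dimension, so $X$ has real dimension at most $2n-3 < 2n-1$, hence Lebesgue measure zero.

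For (ii), I would factor the problem through two continuities. The first is that the Baernstein rearrangement $u\mapsto u^*(\theta):=\sup_{|E|=2\theta}\int_E u$ is $1$-Lipschitz as a map $L^1([-\pi,\pi])\to\R$: for any admissible $E$, $\bigl|\int_E u - \int_E v\bigr|\leq\|u-v\|_{L^1}$, and taking supremum gives $|u^*(\theta)-v^*(\theta)|\leq\|u-v\|_{L^1}$. The second, and core of the proof, is that $\zeta\mapsto\log|F(re^{i\cdot}\zeta)|$ is continuous from $\mathbb{S}^{2n-1}\setminus X$ into $L^1([-\pi,\pi])$. Using the decomposition $\log|F|=\log|G|-\log|H|$ this reduces to the analogous statement for $G$ and $H$ separately. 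Given $\zeta_k\to\zeta_0$, one has $G_{\zeta_k}\to G_{\zeta_0}$ uniformly on $\overline{\Delta_{r+\varepsilon}}$, and Hurwitz's theorem permits a factorization
\[
G_{\zeta_k}(z)=\prod_{j=1}^{m}(z-b_j(\zeta_k))\cdot h_k(z)
\]
on $\Delta_{r+\varepsilon}$, where $b_j(\zeta_k)\to b_j(\zeta_0)$ enumerate (with multiplicity) the zeros of $G_{\zeta_0}$, and $h_k$ is holomorphic nonvanishing with $h_k\to h_0$ uniformly on $\overline{\Delta_r}$. On $|z|=r$ the term $\log|h_k(re^{i\cdot})|$ converges uniformly in $x$, so the $L^1$-continuity reduces to a sub-lemma on the zero factors.

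The sub-lemma asserts that $b\mapsto\log|re^{i\cdot}-b|$ is continuous from $\C$ to $L^1([-\pi,\pi])$. To prove it I would combine three facts: pointwise a.e.\ convergence $\log|re^{ix}-b_k|\to\log|re^{ix}-b|$; the uniform upper bound $\log|re^{ix}-b|\leq\log(r+|b|)$; and the explicit formula $\int_{-\pi}^{\pi}\log|re^{ix}-b|\,dx=2\pi\log\max(r,|b|)$ (Jensen), which is continuous in $b$. Applying Scheff\'e's lemma to the non-negative sequence $\log(r+|b_k|)-\log|re^{i\cdot}-b_k|$ then yields $L^1$-convergence. This sub-lemma is the main delicate point of the proof: when $|b|=r$ the function $\log|re^{i\cdot}-b|$ has a logarithmic singularity on the circle of integration, so one cannot invoke dominated convergence directly.

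Part (iii) is essentially a corollary of the argument for (ii). For $\zeta\in\mathbb{S}^{2n-1}\setminus X$ the poles of $F_\zeta$ coincide with the zeros of $H_\zeta$ (no cancellation with zeros of $G_\zeta$), so $N(r,\infty;F_\zeta)=N(r,0;H_\zeta)$. Jensen's formula (valid since $H(0)=G(0)\neq 0$) gives
\[
N(r,0;H_\zeta)=\frac{1}{2\pi}\int_{-\pi}^{\pi}\log|H(re^{ix}\zeta)|\,dx-\log|H(0)|,
\]
whose right-hand side is continuous in $\zeta$ by the $L^1$-continuity established in (ii), applied to $H$.
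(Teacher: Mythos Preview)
Your argument is correct. Part (i) is essentially the paper's proof: both project the indeterminacy locus $G^{-1}(0)\cap H^{-1}(0)$ (complex codimension $\geq 2$) to $\CP^{n-1}$ via the Hopf fibration and count dimensions; the paper phrases the last step via Sard's theorem rather than a direct dimension bound, but the content is the same. Part (iii) is also the paper's argument: reduce $N(r,\infty;F_\zeta)$ to $N(r,0;H_\zeta)$ for $\zeta\notin X$, then invoke Jensen and the continuity of $\zeta\mapsto\int_{-\pi}^{\pi}\log|H(re^{ix}\zeta)|\,dx$.

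Part (ii), however, takes a genuinely different route. The paper works through the Baernstein level $t(\zeta)$ determined by $|E(\zeta,t(\zeta))|=2\theta$, first proving (Lemma~\ref{lemcont}) that $\zeta\mapsto t(\zeta)$ is continuous off $X$ by a careful analysis near the poles of $F_{\zeta_0}$, and then controlling the integrals via the Edrei--Fuchs uniform integrability estimate (Lemma~\ref{lemjen}), which draws on Nevanlinna theory. Your approach bypasses both lemmas: the observation that $u\mapsto\sup_{|E|=2\theta}\int_E u$ is $1$-Lipschitz from $L^1$ to $\R$ reduces (ii) directly to the $L^1$-continuity of $\zeta\mapsto\log|F(re^{i\cdot}\zeta)|$, which you obtain from the Hurwitz factorization together with the elementary sub-lemma on $b\mapsto\log|re^{i\cdot}-b|$ via Jensen's formula and Scheff\'e's lemma. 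This is more self-contained and avoids the external Edrei--Fuchs input; it also shows that the $L^1$-continuity of $\zeta\mapsto\log|G(re^{i\cdot}\zeta)|$ and $\zeta\mapsto\log|H(re^{i\cdot}\zeta)|$ actually holds on all of $\mathbb S^{2n-1}$, so that $\zeta\mapsto F_\zeta^*(re^{i\theta})$ is continuous on the whole sphere, not merely on $\mathbb S^{2n-1}\setminus X$ --- a mild strengthening of what is stated (and consistent with the paper's Example~1). What the paper's approach buys in return is the auxiliary Lemma~\ref{lemcont} on continuity of the Baernstein level $t(\zeta)$, which may be of independent use.
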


In order to prove Theorem \ref{theocon} we first establish two lemmas which may be of independent interest.
Following A. Baernstein \cite{ba2}, we introduce   the level sets 
$$E(\zeta,t)=\{x \in [-\pi,\pi] \ | \ \log |F(re^{ix}\zeta)| >t\},$$
where $\zeta \in \mathbb{S}^{2n-1}$, $t \in \R$ and $r>0$. It follows from the proof of Proposition 1 in \cite{ba2} that for any $\zeta \in \mathbb{S}^{2n-1}$ there exists $t(\zeta) \in \R$ such that   
$$T^*(re^{i\theta},F_\zeta)=\frac{1}{2\pi}\int_{E(\zeta,t(\zeta))} \log |F(re^{ix}\zeta)|dx+N(r,\infty;F_\zeta)$$   
with $|E(\zeta,t(\zeta))|=2\theta$. Indeed, following A. Baernstein's notations in \cite{ba2}, in our case the distribution function 
$\lambda(t)=|E(\zeta,t)|$ of $F_\zeta$ 
is continuous since every level set of $F_\zeta$ has measure zero and one can take $E=A$. It follows that 
\begin{equation}\label{eqtstar1}
T^*(re^{i\theta},F_\zeta)=\frac{1}{2\pi}\int_{-\pi}^{\pi} \log^+ (\left|F(re^{ix}\zeta)|-t(\zeta)\right)dx+\frac{\theta t(\zeta)}{\pi}+N(r,\infty;F_\zeta),
\end{equation}
where $\log^+ (\left|F(re^{ix}\zeta)|-t(\zeta)\right)=\max \{\log |F(re^{ix}\zeta)|-t(\zeta),0\}$.

\begin{lem}\label{lemcont}
The function $\zeta \mapsto t(\zeta)$ is continuous on $\mathbb{S}^{2n-1}\setminus X$.  
\end{lem}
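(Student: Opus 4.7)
The plan is to prove that $t(\zeta_n)\to t(\zeta_0)$ whenever $\zeta_n\to\zeta_0$ in $\mathbb{S}^{2n-1}\setminus X$, by characterizing $t(\zeta)$ as the unique level where the distribution function $\lambda_\zeta(t):=|E(\zeta,t)|$ attains the value $2\theta$, and then showing that these distribution functions converge pointwise.

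First I would sharpen the properties of $\lambda_\zeta$ for $\zeta\in\mathbb{S}^{2n-1}\setminus X$. Because $F(0)=1$ we may normalize $G(0)=H(0)=1$, so the slices $G_\zeta,H_\zeta$ are nontrivial entire functions of one complex variable; together with coprimality of $G,H$ and $\zeta\notin X$, this forces $F_\zeta$ to be a genuine meromorphic function on $\C$ with disjoint zero and pole sets. Consequently $x\mapsto\log|F(re^{ix}\zeta)|$ is real-analytic on $[-\pi,\pi]$ away from finitely many integrable logarithmic $\pm\infty$-singularities. Continuity of $\lambda_\zeta$ in $t$ is already recalled in the excerpt; the same real-analyticity yields strict monotonicity of $\lambda_\zeta$ throughout the essential range of $\log|F_\zeta|$, so that the equation $\lambda_\zeta(t(\zeta))=2\theta$ determines $t(\zeta)$ uniquely whenever $2\theta\in(0,2\pi)$ (the endpoint $\theta=0$ being trivial).

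Next I would show that $f_n(x):=\log|F(re^{ix}\zeta_n)|$ converges in Lebesgue measure to $f_0(x):=\log|F(re^{ix}\zeta_0)|$ on $[-\pi,\pi]$. Let $S\subset[-\pi,\pi]$ be the finite set where $G_{\zeta_0}(re^{ix})H_{\zeta_0}(re^{ix})=0$. Outside any small neighborhood $U$ of $S$, continuity of $G$ and $H$ on $\C^n$ delivers $f_n\to f_0$ uniformly. Inside $U$, since $\zeta_0\notin X$ precludes any point of $S$ from being a common zero of $G_{\zeta_0}$ and $H_{\zeta_0}$, a Hurwitz/Rouch\'e argument locally factors $G_{\zeta_n}$ and $H_{\zeta_n}$ into a nonvanishing holomorphic factor times a product $\prod_j(z-\alpha_{n,j})$ whose roots $\alpha_{n,j}$ converge to the corresponding roots at $\zeta_0$; uniform integrability of the perturbed logarithmic singularities $\log|z-\alpha_{n,j}|$ on the circle then forces the measure of $\{x\in U:|f_n-f_0|>\delta\}$ to shrink to zero uniformly in $n$.

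Finally, I would invoke the classical fact that convergence in measure together with continuity of the limiting distribution function implies the pointwise convergence $\lambda_{\zeta_n}(t)\to\lambda_{\zeta_0}(t)$ for every $t\in\R$. If $t(\zeta_n)\not\to t(\zeta_0)$, a subsequence would give $t(\zeta_{n_k})\to t^*\in[-\infty,+\infty]$ with $t^*\neq t(\zeta_0)$. For $t^*\in\R$, applying pointwise convergence of $\lambda$ at values just above and below $t^*$ to the identity $\lambda_{\zeta_{n_k}}(t(\zeta_{n_k}))=2\theta$ yields $\lambda_{\zeta_0}(t^*)=2\theta$, contradicting the strict monotonicity at $t(\zeta_0)$ established in the first step; the cases $t^*=\pm\infty$ are excluded using $0<2\theta<2\pi$ and a uniform estimate on $|\{x:|f_n(x)|>M\}|$ obtained as in the previous step. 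The main obstacle is the uniform control near $S$ in the second step, which boils down to a careful slice-wise tracking of the moving zeros of $G_\zeta$ and $H_\zeta$ and a uniform domination of their associated logarithmic singularities.
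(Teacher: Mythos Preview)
Your argument is correct and complete in outline, but it follows a genuinely different route from the paper's own proof. The paper never passes through convergence in measure or through abstract distribution-function convergence. Instead it gives a direct $\varepsilon$--$\delta$ argument showing the level-set inclusion $E(\zeta_0,t(\zeta_0))\subset E(\zeta,t(\zeta_0)-\varepsilon)$ for $\zeta$ close to $\zeta_0$: the portion of $E(\zeta_0,t(\zeta_0))$ away from the poles of $F_{\zeta_0}$ is handled by ordinary uniform continuity of $\log|F|$ on a compact set, while the portion near the poles is handled by the observation that $\log|F|$ stays above $t(\zeta_0)+1$ on small balls around them (this is exactly where $\zeta_0\notin X$ enters). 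From $|E(\zeta,t(\zeta_0)-\varepsilon)|\ge 2\theta$ one reads off $t(\zeta)\ge t(\zeta_0)-\varepsilon$, and a symmetric inclusion gives the other inequality.

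Your approach is more structural: you first pin down $t(\zeta)$ as the \emph{unique} solution of $\lambda_\zeta(t)=2\theta$ via strict monotonicity of $\lambda_\zeta$, then prove $\log|F(re^{ix}\zeta_n)|\to\log|F(re^{ix}\zeta_0)|$ in measure by a Hurwitz/Rouch\'e tracking of the moving zeros and poles, and finally deduce $\lambda_{\zeta_n}\to\lambda_{\zeta_0}$ pointwise. This buys you a cleaner logical skeleton and makes the role of the hypothesis $\zeta_0\notin X$ transparent (it is precisely what prevents zero/pole collision and allows the local factorization). The paper's argument, by contrast, is more elementary and avoids the machinery of convergence in measure and the uniform-integrability estimate near $S$ that you flag as the main technical point; it also sidesteps the need to verify strict monotonicity of $\lambda_{\zeta_0}$, since only the monotonicity of $t\mapsto |E(\zeta,t)|$ is used. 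Both arguments ultimately hinge on the same two ingredients---uniform control of $\log|F|$ on compacta away from the singular set, and the fact that near a pole of $F_{\zeta_0}$ the values of $\log|F|$ remain large under perturbation of $\zeta$---but they package them differently.
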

\begin{proof}
Fix $\zeta_0 \in \mathbb{S}^{2n-1}\setminus X$ and $\varepsilon>0$. Recall that $\displaystyle F=\frac{G}{H}$ where $G$ and $H$ are two coprime entire functions.   
Denote by  $p_j=re^{ix_j}\zeta_0 \in \C^n$ with $x_j \in [-\pi,\pi]$, $j=1,\cdots,N$,  the points such that $H(p_j)=0$. Note that
since $\zeta_0 \in \mathbb{S}^{2n-1}\setminus X$ then $G(p_j)\neq 0$.  There exists $\varepsilon'>0$ such that if $Z \in \cup_{j=1}^k \B(p_j,\varepsilon')$
then $\log |F(Z)|>t(\zeta_0)+1$. Here $\B(p_j,\varepsilon')$ denotes the open ball centered at $p_j$ and of radius $\varepsilon'$. We then chose $\delta>0$ such that if $|x-x_j|<\delta$ for some $j=1,\cdots,N$ and $\|\zeta-\zeta_0\|<\delta$ then
$re^{ix}\zeta \in \B(p_j,\varepsilon')$. Next we choose $t'$ large enough in such a way that if $x \in E(\zeta_0,t')$ then there exists 
$1\leq j\leq N$ such that $|x-x_j|<\delta$. Finally we consider a compact set $K\subset [-\pi,\pi]$  avoiding the singularities of $\log |F(re^{ix}\zeta_0)|$ and containing $E(\zeta_0,t(\zeta_0))\setminus E(\zeta_0,t')$. 
There exists $\delta'>0$ such that if $\|\zeta-\zeta_0\|<\delta'$ then 
$$\sup_{x\in K}|\log |F(re^{ix}\zeta)|-\log |F(re^{ix}\zeta_0)||<\varepsilon.$$ Let $x \in E(\zeta_0,t(\zeta_0))\setminus E(\zeta_0,t')$. 
Then $\log |F(re^{ix}\zeta_0)|>t(\zeta_0)$ and so 
\begin{equation}\label{eqineq1}
\log |F(re^{ix}\zeta)|>\log |F(re^{ix}\zeta_0)|- \varepsilon > t(\zeta_0)-\varepsilon
\end{equation}
 whenever $\|\zeta-\zeta_0\|<\delta'$. 
Now let $x \in E(\zeta_0,t')$. Then there is $1\leq j \leq N$ such that $|x-x_j|<\delta$. If $\|\zeta-\zeta_0\|<\delta$ then  $re^{ix}\zeta \in \B(p_j,\varepsilon')$ and therefore 
\begin{equation}\label{eqineq2}
\log |F(z)|>t(\zeta_0)+1.
\end{equation}
 It follows from (\ref{eqineq1}) and (\ref{eqineq2}) that
\begin{equation*}
E(\zeta_0,t(\zeta_0))\subset E(\zeta,t(\zeta_0)-\varepsilon).
\end{equation*}
whenever $\|\zeta-\zeta_0\|<\min\{\delta,\delta'\}$. Since $|E(\zeta_0,t(\zeta_0))|=|E(\zeta,t(\zeta))|=2\theta$, this implies $t(\zeta)\geq t(\zeta_0)-\varepsilon$. By symmetry we obtain $|t(\zeta)-t(\zeta_0)| \leq\varepsilon$ if $\|\zeta-\zeta_0\|<\min\{\delta,\delta'\}$. Therefore $\zeta \mapsto t(\zeta)$ is 
continuous on ${\mathbb{S}^{2n-1}}\setminus X$.
\end{proof}

\begin{lem}\label{lemjen} 
Let $H:\C^n \to \C$ be an entire function and let $r>0$. The function $\zeta \mapsto \int_{-\pi}^{\pi} \log |H(re^{ix}\zeta)| dx$ defined on 
$\mathbb{S}^{2n-1}$ is continuous.    
\end{lem}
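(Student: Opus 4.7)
The plan is to use Jensen's formula to rewrite the integral as a constant plus a counting function of the zeros of the slice $h_\zeta(z):=H(z\zeta)$ in $\Delta_r$, and then to obtain continuity from the continuous dependence of zeros under perturbation (Hurwitz/Rouch\'e). If $H\equiv 0$ the claim is trivial, so assume $H\not\equiv 0$; I also assume $H(0)\ne 0$, which is the setting in which the lemma is applied in Theorem \ref{theocon}: there $H$ is the denominator of the meromorphic function $F$ with $F(0)=1$ and coprime presentation $F=G/H$, which forces $H(0)\ne 0$. Under this hypothesis, $h_\zeta(0)=H(0)\ne 0$ for every $\zeta$, so Jensen's formula yields
$$\int_{-\pi}^{\pi}\log|H(re^{ix}\zeta)|\,dx=2\pi\log|H(0)|+2\pi\sum_{a\in h_\zeta^{-1}(0)\cap\Delta_r}\log\frac{r}{|a|},$$
where zeros are counted with multiplicity. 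Since the first term does not depend on $\zeta$, it suffices to show that the counting sum $N(r,0;h_\zeta)$ is continuous in $\zeta$.

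I would then fix $\zeta_0\in\mathbb{S}^{2n-1}$ and choose $r'>r$ such that $h_{\zeta_0}$ has no zero on the circle $|z|=r'$; this is possible because the zeros of the nonzero entire function $h_{\zeta_0}$ are isolated. Since $H$ is uniformly continuous on compact subsets of $\C^n$, $h_\zeta \to h_{\zeta_0}$ uniformly on $\overline{\Delta_{r'}}$ as $\zeta\to\zeta_0$. By Rouch\'e (or Hurwitz's theorem), for all $\zeta$ in a sufficiently small neighborhood of $\zeta_0$ the function $h_\zeta$ has the same total number $M$ of zeros in $\overline{\Delta_{r'}}$ as $h_{\zeta_0}$, and these zeros can be labelled $b_1(\zeta),\dots,b_M(\zeta)$ so that $b_j(\zeta)\to b_j(\zeta_0)$ as $\zeta\to\zeta_0$.

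To finish, I would analyze the contribution of each $b_j$ to $N(r,0;h_\zeta)$. If $|b_j(\zeta_0)|<r$, then $b_j(\zeta)\in\Delta_r$ for $\zeta$ near $\zeta_0$ and $\log(r/|b_j(\zeta)|)\to \log(r/|b_j(\zeta_0)|)$ by continuity; if $|b_j(\zeta_0)|>r$, then $b_j(\zeta)\notin\Delta_r$ nearby and the contribution is zero on both sides. The main obstacle is the borderline case $|b_j(\zeta_0)|=r$: such a zero contributes $\log(r/r)=0$ to $N(r,0;h_{\zeta_0})$, but under perturbation $b_j(\zeta)$ may drift inside or outside the circle $|z|=r$. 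In either event $|b_j(\zeta)|\to r$, so $\log(r/|b_j(\zeta)|)\to 0$ whether or not this term is retained in the sum. Adding all contributions yields $N(r,0;h_\zeta)\to N(r,0;h_{\zeta_0})$, which gives the continuity of the integral at $\zeta_0$ and hence on all of $\mathbb{S}^{2n-1}$.
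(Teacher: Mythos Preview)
Your argument is correct and takes a genuinely different route from the paper. The paper does not pass through Jensen's formula; instead its proof actually establishes the stronger $L^1$ statement $\int_{-\pi}^{\pi}\bigl|\log|H(re^{ix}\zeta)|-\log|H(re^{ix}\zeta_0)|\bigr|\,dx\to 0$, by first obtaining via an Edrei--Fuchs lemma a bound $\int_E |\log|H(re^{ix}\zeta)||\,dx\le c'(r)\,|E|\bigl(1+\log^+\tfrac{1}{|E|}\bigr)$ uniform in $\zeta$, and then splitting $[-\pi,\pi]$ into a large set on which $\log|H(re^{ix}\zeta_0)|$ is bounded below (so uniform continuity in $\zeta$ applies) and a small complement controlled by that uniform bound. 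Your approach is more elementary---only Jensen and Hurwitz/Rouch\'e are needed---and makes transparent that continuity of the integral is precisely the continuous variation of the zero multiset of $h_\zeta$. The trade-off is that you obtain only convergence of the integrals, not $L^1$ convergence of the integrands; it is the latter that the paper actually invokes in the proof of part~$ii.$ of Theorem~\ref{theocon}, where the estimate for $J$ requires $\int_{-\pi}^{\pi}|\log|G_\zeta|-\log|G_{\zeta_0}||\,dx\to 0$. Both arguments implicitly use $H(0)\neq 0$ (in the paper this enters when bounding $T(2r,H_\zeta)+T(2r,1/H_\zeta)$ by $3T(2r,H_\zeta)$ uniformly in $\zeta$), so your explicit restriction is not a loss relative to the paper.
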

\begin{proof} Let $\zeta_0 \in \mathbb{S}^{2n-1}$ and let $\varepsilon>0$. If $E \subset [-\pi,\pi]$ is a set then, following  \cite{ed-fu2} and using Lemma III in \cite{ed-fu1}, we have for any $\zeta \in \mathbb{S}^{2n-1}$ 
\begin{eqnarray*}
\frac{1}{2\pi}\int_{E} |\log |H(re^{ix}\zeta)||dx & \leq & m(r;H_{\zeta},E) + m\left(r;\frac{1}{H_{\zeta}},E\right)\\
\\
& \leq & c \left(T(2r,H_{\zeta})+T\left(2r,\frac{1}{H_{\zeta}}\right) \right) |E| \left(1+ \log^+\frac{1}{|E|} \right)    \\
\\
& \leq & 3c T(2r;H_{\zeta}) |E| \left(1+ \log^+\frac{1}{|E|} \right)    \\
\\
& \leq & 3c \log M(2r;H_{\zeta}) |E| \left(1+ \log^+\frac{1}{|E|} \right)    \\
\\
& \leq & c'(r)|E| \left(1+ \log^+\frac{1}{|E|} \right)=\Psi(r,E)\\
\end{eqnarray*}
where $c>0$ is a constant, $c'(r)>0$ is a constant depending only on $r$, and where 
$$m(r;H_{\zeta},E)=\frac{1}{2\pi}\int_E \log^+H_{\zeta}(re^{ix})dx.$$
Consider now $t_0<0$ with $-t_0$ large enough such that  $2\pi\Psi(r,[-\pi,\pi]\setminus E(\zeta_0,t_0))<\varepsilon$.  
There exists $\delta>0$ such that if $\|\zeta-\zeta_0\|<\delta$ then 
$$\sup_{x\in \overline{E(\zeta_0,t_0)}}\left|\log |H(re^{ix}\zeta)|-\log |H(re^{ix}\zeta_0)|\right|<\varepsilon.$$ 
Set 
$$I=\left| \int_{-\pi}^{\pi} \log |H(re^{ix}\zeta)|-\log |H(re^{ix}\zeta_0)| dx\right|.$$ For $\zeta \in \mathbb{S}^{2n-1}$ such that $\|\zeta-\zeta_0\|<\delta$, we have
\begin{eqnarray*}
I&\leq&\left| \int_{\overline{E(\zeta_0,t_0)}} \log |H(re^{ix}\zeta)|-\log |H(re^{ix}\zeta_0)| dx\right|\\
\\
& & +\left| \int_{[-\pi,\pi]\setminus E(\zeta_0,t_0)} \log |H(re^{ix}\zeta)|-\log |H(re^{ix}\zeta_0)| dx\right|\\
\\
& \leq &  \int_{\overline{E(\zeta_0,t_0)}} \left|\log |H(re^{ix}\zeta)|-\log |H(re^{ix}\zeta_0)|\right| dx+ \int_{[-\pi,\pi]\setminus E(\zeta_0,t_0)} \left|\log |H(re^{ix}\zeta)|\right| dx\\
\\
& & +  \int_{[-\pi,\pi]\setminus E(\zeta_0,t_0)} \left|\log |H(re^{ix}\zeta_0)|\right| dx\\
\\
& \leq & \varepsilon + 4\pi \Psi\left(r,[-\pi,\pi]\setminus E(\zeta_0,t_0)\right).\\
\end{eqnarray*}
This proves the continuity of $\zeta \mapsto \int_{-\pi}^{\pi} \log |H(re^{ix}\zeta)| dx$ on the sphere $\mathbb{S}^{2n-1}$. 
\end{proof}
We now prove Theorem \ref{theocon}.
\begin{proof}[Proof of Theorem \ref{theocon}]
We prove $i.$ 
 Let $\mathcal Z\subset \mathbb C^n$ be the \emph{indeterminacy set} of $F$, that is 
 \begin{equation}\label{eqind}
 \mathcal Z=\{Z\in \mathbb C^n \ | \  G(Z)=H(Z)=0\}.
 \end{equation}
By the assumptions on $F$, $\mathcal Z$ is a complex analytic subvariety of $\mathbb C^n$ of complex dimension at most $n-2$. Let $\tau:\mathbb C^n\to\mathbb C\mathbb P^{n-1}$ be the projection of $\mathbb C^n$ onto the projective space $\mathbb C\mathbb P^{n-1}$. Note that the restriction $\tau_{|\mathbb S^{2n-1}}$ is a constant rank map $\mathbb S^{2n-1}\to\mathbb C\mathbb P^{n-1}$; it is indeed a fibration - the \emph{Hopf fibration} - with fiber $\mathbb S^1$. Therefore for any subset $K\subset \mathbb C\mathbb P^{n-1}$ we have that the ($2n-2$-dimensional) Lebesgue measure of $K$ vanishes if and only if  the ($2n-1$-dimensional) Lebesgue measure of the inverse image $\tau_{|\mathbb S^{2n-1}}^{-1} (K)\subset \mathbb S^{2n-1}$ is zero. Since by definition $X=\tau_{|\mathbb S^{2n-1}}^{-1}(\tau(\mathcal Z))$, to prove $i.$ it is enough to show that $\tau(\mathcal Z)\subset \mathbb C\mathbb P^{n-1}$ has measure $0$.

Since $\mathcal Z$ is a ($n-2$)-dimensional complex subvariety of $\mathbb C^n$, there exists a countable collection $\{\mathcal Z_j\}_{j\in \mathbb N}$ of locally closed, non-singular complex submanifolds of $\mathbb C^n$, each one of dimension at most $n-2$, such that $\mathcal Z=\cup_{j\in \mathbb N} \mathcal Z_j$. Fixed $j\in \mathbb N$, consider the restriction $\tau_{|\mathcal Z_j}:\mathcal Z_j\to\mathbb C\mathbb P^{n-1}$. The map $\tau_{|\mathcal Z_j}$ is smooth (and in fact analytic), and its rank at any point $p$  of $\mathcal Z_j$ is less than $n-1$ since $\dim_{\mathbb C} \mathcal Z_j\leq n-2$, hence all $p\in \mathcal Z_j$ are critical points of $\tau_{|\mathcal Z_j}$. It follows by Sard's theorem that $\tau(\mathcal Z_j)$ has measure zero. Since $\tau(\mathcal Z)\subset \cup_{j\in \mathbb N} \tau(\mathcal Z_j)$ we conclude that $\tau(\mathcal Z)$ has measure zero.

\vspace{0.5cm}

We now prove $ii.$ 
We fix $re^{i\theta}$ with $r>0$ and $\theta \in [0,\pi)$.
According to Equation (\ref{eqtstar1}) and Lemma \ref{lemcont} we only need to show that the function  
$$\zeta \mapsto \int_{-\pi}^{\pi} \log^+ \left(|F(re^{ix}\zeta)|-t(\zeta)\right)dx$$
defined on $\mathbb{S}^{2n}\setminus X$ is continuous. Let $\zeta_0 \in \mathbb{S}^{2n}\setminus X$ and let $\varepsilon>0$.
Set 
$$J=\left|\int_{-\pi}^{\pi} \log^+ \left(|F(re^{ix}\zeta)|-t(\zeta)\right)- \log^+\left(|F(re^{ix}\zeta_0)|-t(\zeta_0)\right)dx \right|.$$ For $\zeta \in  \mathbb{S}^{2n}\setminus X$ such that 
$\|\zeta-\zeta_0\|<\delta$ we have
\begin{eqnarray*}
J&\leq &\int_{-\pi}^{\pi} \left|\left(\log |F(re^{ix}\zeta)|-t(\zeta)\right)- \left(\log |F(re^{ix}\zeta_0)|-t(\zeta_0)\right)\right|dx \\
\\
& \leq & \int_{-\pi}^{\pi} \left|\log |G(re^{ix}\zeta)|- \log |G(re^{ix}\zeta_0)|\right|dx\\
\\
& &
+ \int_{-\pi}^{\pi} \left|\log |H(re^{ix}\zeta)|- \log |H(re^{ix}\zeta_0)|\right|dx+
\int_{-\pi}^{\pi} \left|t(\zeta)-t(\zeta_0)\right|dx\\
\end{eqnarray*}
The statement $ii.$ now follows from Lemma \ref{lemcont} and Lemma \ref{lemjen}.

\vspace{0.5cm}

Finally  $iii.$ follows directly from Lemma \ref{lemjen} since  when $\zeta \in \mathbb{S}^{2n}\setminus X$ we have
$$N(r, \infty; F_{\zeta})=N(r, 0; H_{\zeta})$$ 
and by Jensen formula (\ref{eqjen})
$$N(r, 0; H_{\zeta})=\frac{1}{2\pi}\int_{-\pi}^{\pi}\log |H(re^{ix}\zeta)|\ dx.$$ 
\end{proof}
Notice that in case the set $X$ is empty, Theorem \ref{theocon} implies that,  for a fixed $re^{i\theta}$, $r>0$, $\theta \in [0,\pi)$, 
 the functions  $\zeta \mapsto F_\zeta^*(re^{i\theta})$ and  $\zeta \mapsto N(r, \infty; F_\zeta)$
 are continuous on ${\mathbb{S}^{2n-1}}$. This is in particular the case when $F:\C^n \to \C$ is entire, or meromorphic without zeros.  
However, note that in general the function $\zeta \mapsto N(r, \infty; F_\zeta)$ may not be continuous on ${\mathbb{S}^{2n-1}}$:
\begin{example}
Consider the meromorphic function on $\C^2$ defined by  
$$F(z_1,z_2)=\frac{z_1-1}{z_2-1}.$$
Then for any $r>0$, we have $N(r, \infty; F_{\zeta_{0}})=0$ for $\zeta_0=\left(\frac{1}{\sqrt{2}},\frac{1}{\sqrt{2}}\right)$. Now for 
$\zeta_k=(r_k,s_k) \in {\mathbb{S}^{2n-1}}$ converging to $\zeta_0$ we have       
$$N(2, \infty; F_{\zeta_k})=\int_0^2 \frac{n(t, \infty; F_{\zeta_k})}{t}dt=\log 2 + \log s_k$$
since $n(t, \infty; F_{\zeta_k})$ equals $0$ for $0<t<1/s_k$ and $1$ for $t\geq 1/s_k$.   
It is interesting to notice that the function $\zeta \mapsto F_\zeta^*(2e^{i\theta})$ is continuous at $\zeta_0$. Indeed it can checked that if $r_k>s_k>0$ we have  
$$F_{\zeta_k}^*(2e^{i\theta})=\frac{1}{2\pi}\int_{-\theta}^{\theta}\log\left|\frac{r_ke^{ix}-1}{s_ke^{ix}-1}\right|dx$$
and if $s_k>r_k>0$ then  
$$F_{\zeta_k}^*(2e^{i\theta})=\frac{1}{2\pi}\int_{\pi-\theta}^{\pi+\theta}\log\left|\frac{r_ke^{ix}-1}{s_ke^{ix}-1}\right|dx.$$
In both cases $F_{\zeta_k}^*(2e^{i\theta})\to 1$ as $\zeta_k \to \zeta_0$. However note that the set $E(\zeta)$ realizing the 
supremum in $F_\zeta^*(2e^{i\theta})$ does not depend continuously on $\zeta$.     
\end{example}

For a fixed $re^{i\theta}$, $r>0$, $\theta \in [0,\pi)$, 
since $\zeta \mapsto T^*(re^{i\theta},F_\zeta)$ is bounded, Theorem \ref{theocon} shows in particular integrability of  $T^*(re^{i\theta},F_\zeta)$
on the unit sphere and therefore allows us to define an analogue of the Baernstein star function associated to a meromorphic function $F$ of several complex variables.   
\begin{defi}
Let  $F:\C^n \to \C$ be a meromorphic function satisfying $F(0)=1$. The star function of $F$ is defined by 
\begin{equation}\label{eqtstar}
T^*(re^{i\theta},F)=\frac{1}{\sigma_{2n-1}}\int_{\mathbb{S}^{2n-1}} T^*(re^{i\theta},F_\zeta) d\sigma(\zeta),
\end{equation}
where $re^{i\theta} \in \overline{\mathbb{H}}\setminus\{0\}$ and $d\sigma$ denotes the Lebesgue surface area measure of the 
$\mathbb{S}^{2n-1}$ and $\sigma_{2n-1}$ its area.
\end{defi}
\begin{rem}
In order to show the integrability of the counting function $N$, it is not strictly necessary to show its continuity. Indeed in case $H:\C^n\to \C$ is an entire function then according to Jensen's formula, for a fixed $r>0$, 
the positive function $\zeta \mapsto N(r,0,H_\zeta)$ is plurisubharmonic (see Proposition 
I.14 in \cite{le-gr}) and therefore $L^1$ on the unit sphere $\mathbb{S}^{2n-1}\subset \C^n$. Now, with respect to the notations of 
Theorem \ref{theocon}, we have $N(r,\infty,F_\zeta)=N(r,0,H_\zeta)$ for $\zeta \in \mathbb{S}^{2n-1}\setminus X$ and so   
$\zeta \mapsto N(r,\infty,F_\zeta)$ is  $L^1$ on $\mathbb{S}^{2n-1}$.
\end{rem}

For $a \in \{0,\infty\}$ we set
\begin{equation*}
N(r, a; F)=\frac{1}{\sigma_{2n-1}}\int_{\mathbb{S}^{2n-1}} N(r, a; F_\zeta) d\sigma(\zeta).
\end{equation*}
The function $N(r, a; F)$ can also be expressed as 
 \begin{equation}\label{eqcount2}
N(r, a; F)=\int_0^r \frac{n(t, a; F)}{t}dt.
\end{equation}
where $\displaystyle n(t, a; F)=\frac{1}{\sigma_{2n-1}}\int_{\mathbb{S}^{2n-1}}n(t, a; F_\zeta)d\sigma(\zeta)$ is, for $a=0$, 
 the Lelong number of the zero set of $F$ (see \cite{le-gr} for instance). 
Notice that since $F(0)=1$
$$T^*(r,F)=N(r, \infty; F),$$ and 
$$T^*(-r,F)=N(r, 0; F).$$ 

In the next proposition, we extend to several variables the main property of the star function (\ref{eqtstar}):
\begin{prop}\label{propsub}
Let  $F:\C^n \to \C$ be a meromorphic function satisfying $F(0)=1$. Then the function $T^*(.,F)$ is subharmonic on 
$\mathbb{H}$.
\end{prop}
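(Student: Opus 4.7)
The plan is to derive the subharmonicity of $T^*(\cdot,F)$ from Baernstein's fundamental result that each slice $T^*(\cdot,F_\zeta)$ is subharmonic on $\mathbb{H}$, by averaging this pointwise property over the sphere. The main technical ingredient is a uniform bound that licenses an application of Fubini's theorem and dominated convergence.

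First, I would establish that $0\leq T^*(re^{i\theta},F_\zeta)\leq C(r)$ for all $\zeta\in\mathbb{S}^{2n-1}$ and $\theta\in[0,\pi]$, with $C(r)<\infty$. Non-negativity follows by taking $E$ of measure $2\theta$ realizing at least the average value of $\frac{1}{2\pi}\int_E\log|F(re^{ix}\zeta)|dx$ over all such sets, which combined with Jensen's formula (\ref{eqjen}) gives $T^*(re^{i\theta},F_\zeta)\geq \frac{\theta}{\pi}N(r,0;F_\zeta)+\bigl(1-\frac{\theta}{\pi}\bigr)N(r,\infty;F_\zeta)\geq 0$. For the upper bound, $T^*(re^{i\theta},F_\zeta)\leq T(r,F_\zeta)\leq T(r,G_\zeta)+T(r,H_\zeta)+O(1)$ by Nevanlinna's first main theorem, and $T(r,G_\zeta)\leq \log^+M_n(r,G)$ where $M_n(r,\cdot)=\sup_{\|w\|\leq r}|\cdot|$ is a quantity independent of $\zeta$, and analogously for $H$.

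Next, fix $z_0\in\mathbb{H}$ and $\rho>0$ with $\overline{B(z_0,\rho)}\subset\mathbb{H}$. The uniform bound makes the integrand dominated and Fubini applies:
\begin{align*}
\frac{1}{\pi\rho^2}\int_{B(z_0,\rho)}T^*(w,F)\,dA(w) &= \frac{1}{\sigma_{2n-1}}\int_{\mathbb{S}^{2n-1}}\frac{1}{\pi\rho^2}\int_{B(z_0,\rho)}T^*(w,F_\zeta)\,dA(w)\,d\sigma(\zeta)\\
&\geq \frac{1}{\sigma_{2n-1}}\int_{\mathbb{S}^{2n-1}}T^*(z_0,F_\zeta)\,d\sigma(\zeta) = T^*(z_0,F),
\end{align*}
where the inequality invokes Baernstein's subharmonicity of $T^*(\cdot,F_\zeta)$, valid for every $\zeta\in\mathbb{S}^{2n-1}$; measurability of the integrand in $\zeta$ is guaranteed by part (ii) of Theorem \ref{theocon}, since continuity on the complement of the null set $X$ gives measurability on the sphere. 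This is the required area submean inequality.

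Finally, for upper semi-continuity, if $z_k\to z$ in $\mathbb{H}$, then Baernstein's continuity of $T^*(\cdot,F_\zeta)$ on $\overline{\mathbb{H}}\setminus\{0\}$ gives pointwise convergence $T^*(z_k,F_\zeta)\to T^*(z,F_\zeta)$ in $\zeta$; together with the uniform bound, dominated convergence yields $T^*(z_k,F)\to T^*(z,F)$, i.e.\ continuity of $T^*(\cdot,F)$ on $\mathbb{H}$. Combined with the submean inequality and local finiteness, this establishes subharmonicity on $\mathbb{H}$. I expect the main obstacle to be the uniform upper bound, whose derivation hinges on reducing the slice characteristics $T(r,G_\zeta),T(r,H_\zeta)$ to the multivariate maxima $M_n(r,G),M_n(r,H)$; once that is in place, the remainder is a routine exchange-of-integrals.
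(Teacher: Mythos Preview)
Your proposal is correct and follows essentially the same approach as the paper: exchange the integrals via Fubini and invoke Baernstein's subharmonicity of each $T^*(\cdot,F_\zeta)$. The only differences are cosmetic---you use area means rather than circle means, you supply the explicit uniform bound $0\le T^*(re^{i\theta},F_\zeta)\le T(r,F_\zeta)\le \log^+M_n(r,G)+\log^+M_n(r,H)+O(1)$ that justifies Fubini and dominated convergence (the paper simply cites Theorem~\ref{theocon} for this), and you fold the continuity argument into the same proof, whereas the paper states it as a separate proposition.
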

\begin{proof}
Let $z_0 \in \mathbb{H}$ and let $r>0$ such that the closed disc centered at $z_0$  and radius $r$ is included in $\mathbb{H}$. We have 
\begin{eqnarray*}
\frac{1}{2\pi}\int_{-\pi}^{\pi} T^*(z_0+re^{i\theta},F)d\theta & = & \frac{1}{2\pi}\int_{-\pi}^{\pi} \frac{1}{\sigma_{2n-1}}\int_{\mathbb{S}^{2n-1}} T^*(z_0+re^{i\theta},F_\zeta) d\sigma(\zeta) d\theta \\
\\
 & = &\frac{1}{\sigma_{2n-1}}\int_{\mathbb{S}^{2n-1}} \frac{1}{2\pi}\int_{-\pi}^{\pi}  T^*(z_0+re^{i\theta},F_\zeta) d\theta d\sigma(\zeta)  \\
 \\
  & \geq &\frac{1}{\sigma_{2n-1}}\int_{\mathbb{S}^{2n-1}} T^*(z_0,F_\zeta) d\sigma(\zeta)  \\
  \\
& =&   T^*(z_0,F)
\end{eqnarray*}
where the second equality follows from Theorem \ref{theocon} and the  inequality from the fact that the usual  Baernstein star function is
subharmonic. Therefore $T^*(.,F)$ is subharmonic  on $\mathbb{H}$.
\end{proof}
It is important to notice that the proof shows that $T^*(.,F)$ is harmonic  on $\mathbb{H}$ if and only if  $T^*(.,F_\zeta)$ is harmonic for a.e. $\zeta \in \mathbb{S}^{2n-1}$. This fact will be used in the proof of Theorem \ref{theohar}.

Now, Theorem \ref{theocon} and the continuity of the usual  Baernstein star function  implies 
directly that:
\begin{prop}Let  $F:\C^n \to \C$ be a meromorphic function satisfying $F(0)=1$. Then the function $T^*(.,F)$ is continuous on $\overline{\mathbb{H}}$.
\end{prop}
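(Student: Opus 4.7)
The plan is to apply the dominated convergence theorem to the defining integral (\ref{eqtstar}). Fix a point $z_0 \in \overline{\mathbb{H}}$ and a sequence $z_k \to z_0$ in $\overline{\mathbb{H}}$; choose $R > 0$ so that $|z_k|, |z_0| \leq R$ for all $k$. Since both $T^*(z_k, F)$ and $T^*(z_0, F)$ are averages over $\mathbb{S}^{2n-1}$ of the slice quantities $T^*(z, F_\zeta)$, it suffices to exhibit an almost-everywhere pointwise limit on the sphere together with an integrable dominator.

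For the pointwise convergence, the classical result of A.~Baernstein recalled earlier in the section asserts that for every $\zeta \in \mathbb{S}^{2n-1}$ with $F_\zeta$ meromorphic and $F_\zeta(0)=1$, the map $T^*(\cdot, F_\zeta)$ is continuous on $\overline{\mathbb{H}}$. The assumption $F(0)=1$ forces $F_\zeta(0)=1$ for every $\zeta$, and by Theorem~\ref{theocon}(i) the exceptional set $X$ has measure zero, so $T^*(z_k, F_\zeta) \to T^*(z_0, F_\zeta)$ for a.e.\ $\zeta$.

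For the dominator I would first reduce to the Nevanlinna characteristic of the slice. Combining the estimates $F_\zeta^*(re^{i\theta}) \leq m(r, F_\zeta)$ (taking the positive part of the integrand) and $F_\zeta^*(re^{i\theta}) \geq -m(r, 1/F_\zeta)$ (discarding the negative part), together with the identity $T(r, 1/F_\zeta) = T(r, F_\zeta)$ that uses $F(0)=1$, one gets
$$|T^*(re^{i\theta}, F_\zeta)| \;\leq\; 2T(r,F_\zeta) + N(r,\infty;F_\zeta) \;\leq\; 3\,T(r, F_\zeta), \qquad r \leq R,\ \zeta \in \mathbb{S}^{2n-1}.$$
Next, using the factorization $F = G/H$ into coprime entire functions, the standard Nevanlinna inequality $T(R, F_\zeta) \leq T(R, G_\zeta) + T(R, H_\zeta) + O(1)$ together with $T(R, G_\zeta) = m(R, G_\zeta) \leq \log^+\!\max_{\|w\| \leq R}|G(w)|$ (and the analogous bound for $H$) yields a uniform bound $T(R, F_\zeta) \leq C(R)$, independent of $\zeta$. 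Thus $3C(R)$ is a constant, a fortiori integrable, dominator on $\mathbb{S}^{2n-1}$.

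Dominated convergence then gives $T^*(z_k, F) \to T^*(z_0, F)$. The case $z_0 = 0$ is handled by the same bound, setting $T^*(0,F):=0$, which is the natural value since each $T^*(0, F_\zeta) = 0$. The only real subtlety is Step 2, where one must control both signs of $F_\zeta^*$; beyond that, the entire-function factorization $F=G/H$ upgrades the dominator from merely $L^1(\mathbb{S}^{2n-1})$ to $L^\infty(\mathbb{S}^{2n-1})$, trivializing the integrability requirement.
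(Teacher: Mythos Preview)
Your argument is correct and is, in essence, the dominated-convergence justification that the paper leaves implicit when it says Theorem~\ref{theocon} together with Baernstein's continuity ``implies directly'' the proposition. Two small remarks. First, the pointwise convergence $T^*(z_k,F_\zeta)\to T^*(z_0,F_\zeta)$ in fact holds for \emph{every} $\zeta\in\mathbb S^{2n-1}$, since $F_\zeta$ is always a one-variable meromorphic function with $F_\zeta(0)=1$; the set $X$ plays no role at this step. Second, the lower bound on $F_\zeta^*$ is not really needed: $T^*(re^{i\theta},F_\zeta)$ is nonnegative (concave in $\theta$ with nonnegative endpoint values $N(r,\infty;F_\zeta)$ and $N(r,0;F_\zeta)$), so $0\le T^*(re^{i\theta},F_\zeta)\le T(r,F_\zeta)$ already suffices, and your Nevanlinna bound via $F=G/H$ then gives the constant dominator $C(R)$.

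The one visible difference in organization is that the paper treats the boundary rays $\theta=0$ and $\theta=\pi$ separately, invoking the integral representation~(\ref{eqcount2}) for $N(r,a;F)$, whereas your dominated-convergence argument handles all of $\overline{\mathbb H}$ (including the origin) in one stroke. That is a mild gain in economy on your side; otherwise the two approaches coincide.
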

 Note that the continuity on $\{re^{i\theta} \in \C \ | \ \theta=\pi\}$ and on $\{re^{i\theta} \in \C \ | \ \theta=0\}$ follows from $(\ref{eqcount2}).$

 \section{Entire functions of several variables with harmonic star function}
In the case of  complex dimension one, as pointed out by A. Baernstein in \cite{ba1}, meromorphic functions of the kind
$$ f(z)=\prod_m \left(1+\frac{z}{r_m}\right)/\prod_m \left(1-\frac{z}{s_m}\right)$$  
where $r_m, s_m>0$ for all integer $m>0$ with $\sum_m \frac{1}{r_{m}}+\sum_m \frac{1}{s_{m}}<\infty$, admit a harmonic star function. In \cite{ab}, the first author characterized all  meromorphic functions with a  harmonic star function; see also the work of M. Ess\'en and D. F. Shea in \cite{es-sh} for the case of meromorphic functions of zero genus. More precisely, it was proved in  \cite{ab} that if $f: \C \to \C$ is a meromorphic function satisfying $f(0)=1$ and such that its star function is harmonic then $f$ can be written $f(z)=P(e^{i\theta}z)$ with
\begin{equation}\label{eqp}
\displaystyle P(z)=e^{\gamma z}\prod_m \left(1+\frac{z}{r_m}\right)/\prod_m \left(1-\frac{z}{s_m}\right),
\end{equation}
where $\theta \in \R$, $\gamma \geq 0$ and $r_m, s_m>0$ for all $m$ with $\sum_m \frac{1}{r_{m}}+\sum_m \frac{1}{s_{m}}<\infty$. From a geometric 
viewpoint, if the star function of $f$ is harmonic then the zeros of $f$ are distributed on one ray and its poles on 
the opposite ray. 

In this section, we characterise meromorphic functions $F: \C^n\to \C$ of several complex variables admitting a harmonic star function. 

\begin{theo}\label{theohar}
Let $F$ be a meromorphic function on $\C^n$ with $F(0)=1$. The star function $T^*(.,F)$ is harmonic on $\mathbb{H}$ if 
and only if there exist a meromorphic function $P:\C \to \C$ of the form (\ref{eqp}) and a vector $\eta=(\eta_1,\ldots,\eta_n)\in \mathbb C^n$ such that $F(Z)=P(Z\cdot\eta)$ for all $Z \in \mathbb C^n$, where we denote $Z\cdot \eta=z_1\eta_1+\ldots + z_n\eta_n$. In particular 
if $T^*(.,F)$ is harmonic on $\mathbb{H}$, then the indeterminacy set of $F$ as defined in (\ref{eqind}) is empty and 
for all $\zeta \in \mathbb{S}^{2n-1}$ the star function $T^*(.,F_{\zeta})$ is harmonic.   
\end{theo}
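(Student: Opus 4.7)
The plan is to prove the two implications separately, expecting the ``only if'' direction to be the substantive one. For the ``if'' direction, assume $F(Z)=P(Z\cdot\eta)$ with $P$ of the form (\ref{eqp}), so that $F_\zeta(z)=P((\zeta\cdot\eta)z)$. A direct check, using the invariance of the one-variable star function under rotation of the argument and its scaling behavior, yields $T^*(re^{i\theta},F_\zeta)=T^*(|\zeta\cdot\eta|re^{i\theta},P)$. By the one-variable result of \cite{ab}, $T^*(\cdot,P)$ is harmonic on $\mathbb{H}$, so each integrand in the definition of $T^*(\cdot,F)$ is harmonic on $\mathbb{H}$ as a function of $re^{i\theta}$, and averaging over $\mathbb{S}^{2n-1}$ preserves harmonicity.

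For the converse, I begin with the remark following Proposition \ref{propsub}: harmonicity of $T^*(\cdot,F)$ forces $T^*(\cdot,F_\zeta)$ to be harmonic for a.e.\ $\zeta\in\mathbb{S}^{2n-1}$. The one-variable characterization of \cite{ab} then gives a decomposition $F_\zeta(z)=P_\zeta(e^{i\theta_\zeta}z)$ with $P_\zeta$ of the form (\ref{eqp}) and parameters $\gamma_\zeta\geq 0$, $\{r_m(\zeta)\},\{s_m(\zeta)\}$ positive. The next step is to identify $\theta_\zeta$: computing $F_\zeta'(0)/F_\zeta(0)$ in both descriptions produces
\[a\cdot\zeta=e^{i\theta_\zeta}\Big(\gamma_\zeta+\sum_m\Big(\tfrac{1}{r_m(\zeta)}+\tfrac{1}{s_m(\zeta)}\Big)\Big),\]
where $a=\nabla F(0)\in\C^n$. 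Since the bracket is non-negative, $\theta_\zeta=\arg(a\cdot\zeta)$ a.e.\ (the degenerate case $a=0$ forces $F\equiv 1$ and is trivial). This connects the one-dimensional data to the geometry of the divisors: a zero of $F_\zeta$ at $-r_m(\zeta)e^{-i\theta_\zeta}$ on $\C\zeta$ corresponds to $Z\in V(G)$ with $a\cdot Z=-r_m(\zeta)|a\cdot\zeta|<0$, whence by continuity $V(G)\subset\{a\cdot Z\in\R_{\leq 0}\}$. Since $L(Z)=a\cdot Z$ is then holomorphic and real-valued on $V(G)$, it is constant on each irreducible component; by dimension, each such component coincides with the complex hyperplane $\{a\cdot Z=c_i\}$, where $c_i<0$ since $G(0)=1$. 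The analogous argument for the poles gives $V(H)=\bigcup_j\{a\cdot Z=d_j\}$ with $d_j>0$, so in particular $\mathcal Z=V(G)\cap V(H)=\varnothing$.

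To reassemble $F$, I form the canonical products $\tilde G(Z)=\prod_i(1+L/r_i)$ and $\tilde H(Z)=\prod_j(1-L/d_j)$, whose convergence follows from the slice-level convergence of $\sum_m(1/r_m(\zeta)+1/s_m(\zeta))$. Then $F/(\tilde G/\tilde H)$ is a zero-free entire function of the form $e^{\chi(Z)}$, normalized so that $\chi(0)=0$. The slice identity reduces to $\chi(z\zeta)=\gamma_\zeta e^{i\theta_\zeta}z$, showing that $\chi$ restricted to a.e.\ complex line through the origin is linear in $z$. A Taylor expansion argument then forces $\chi(Z)=b\cdot Z$ for some $b\in\C^n$, using that a homogeneous polynomial vanishing a.e.\ on $\mathbb{S}^{2n-1}$ vanishes identically. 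Comparing with the slice gives $b\cdot\zeta=\gamma_\zeta e^{i\theta_\zeta}$ with $\gamma_\zeta\geq 0$, hence the meromorphic function $(b\cdot\zeta)/(a\cdot\zeta)$ of $\zeta$ takes values in $\R_{\geq 0}$ on a dense open set; this forces it to be constant, equal to some $\gamma\geq 0$, so that $b=\gamma a$. Altogether, $F(Z)=P(a\cdot Z)$ with $P(w)=e^{\gamma w}\prod(1+w/r_i)/\prod(1-w/d_j)$, which is of the form (\ref{eqp}), and this proves the theorem with $\eta=a$. I expect the main obstacle to be the rigidity step in which the components of $V(G)$ and $V(H)$ are shown to be affine complex hyperplanes; both the normalization $F(0)=1$ (which prevents these divisors from being cones through the origin) and the real-valuedness of $L$ on them are essential to make the dimensional argument work.
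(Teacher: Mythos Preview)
Your argument is correct, but it follows a genuinely different route from the paper's.  The paper never touches the divisors $V(G),V(H)$ directly; instead it works entirely with Taylor coefficients.  From the one--variable form of $F_\zeta$ it extracts the identity $F_\zeta^{(k)}(0)=d_k(\zeta)\,(F_\zeta'(0))^k$ with $d_k(\zeta)\in\R$ (Lemma~\ref{lemcoef}), and since $F_\alpha^{(k)}(0)=k!\,P_k(\alpha)$ for the $k$-homogeneous part $P_k$ of the Taylor expansion of $F$, the quotient $\alpha\mapsto P_k(\alpha)/P_1(\alpha)^k$ is a real-valued meromorphic function on $\C^n$, hence a constant $c_k\in\R$ (Lemma~\ref{lemexp}).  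This immediately gives $F(Z)=\sum_k c_k(P_1(Z))^k=P(Z\cdot\eta)$ with $\eta=\partial F(0)$ and $P(w)=F(w/\eta_1,0,\dots,0)$, and the form (\ref{eqp}) of $P$ then follows from \cite{ab}.

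Your approach instead makes the geometry explicit: the rigidity step ``$a\cdot Z$ is holomorphic and real on $V(G)$, hence constant on components, hence each component is a hyperplane $\{a\cdot Z=c_i\}$'' is the analogue of the paper's ``$P_k/P_1^k$ is meromorphic and real, hence constant''.  Both pivots are the same open-mapping principle, applied at different places.  The payoff of the paper's method is brevity: it sidesteps the density/continuity passage from a.e.\ $\zeta$ to all of $V(G)$, the bookkeeping of multiplicities in the divisor of $G$, the convergence of the global Weierstrass products, and the separate analysis of the exponential factor --- all of which you handle correctly but which add length.  The payoff of your method is a transparent description of the zero and pole loci of $F$ as parallel families of affine hyperplanes, which the Taylor-series argument only recovers \emph{a posteriori} from the shape of $P$.
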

\begin{rem}
When $F$ is nonconstant the function $P$ is given by a (rescaled) restriction of $F$ to the complex line 
$\{z\partial F(0)\ | \ z\in \C\}$, where $\partial F(0)=\left(\frac{\partial F}{\partial z_1}(0), \cdots,\frac{\partial F}{\partial z_n}(0)\right)$. 
\end{rem}

We first establish the two following lemmas
\begin{lem}\label{lemcoef}
Let $f:\C \to \C$ be a meromorphic function of the form
\begin{equation}\label{eqformf}
f(z)=e^{\gamma e^{i\theta}z} \prod\limits_{m}\left( 1+\frac{e^{i\theta}z}{r_{m}}%
\right) /\prod_{m}\left( 1-\frac{e^{i\theta }z}{s_{m}}%
\right) 
\end{equation}
with $\theta \in \R$, $\gamma \geq 0$, $r_m, s_m>0$ for all $m$ and  
\[
\sum_m \frac{1}{r_{m}}+\sum_m \frac{1}{s_{m}}<\infty. 
\]%
Assume furthermore that $f$ has at least one zero or pole. Then%
\begin{equation}\label{eqf}
f^{(k)}(0)=d_{k}e^{ik\theta}=\frac{d_k}{d_1^k} \cdot (f'(0))^k, 
\end{equation}
where $d_{k}$ is real for all $k\geq 0$.
\end{lem}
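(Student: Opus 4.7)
The plan is to reduce to the case $\theta=0$ by a rotation, show that the rotated function has real Taylor coefficients at the origin, and then track the phase $e^{i\theta}$ through the chain rule.

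Specifically, I would define $g(z) = f(e^{-i\theta}z)$. Plugging into formula (\ref{eqformf}), one reads off that
$$g(z) = e^{\gamma z}\prod_m\left(1+\frac{z}{r_m}\right)\bigg/\prod_m\left(1-\frac{z}{s_m}\right),$$
where now every parameter ($\gamma, r_m, s_m$) is real and nonnegative. Since $g(0)=1$, $g$ is holomorphic in the disc of radius $\min_m s_m$ around $0$, and moreover satisfies the Schwarz reflection identity $g(\bar z) = \overline{g(z)}$: this property holds factor-by-factor for $e^{\gamma z}$ and for each $1\pm z/r_m$, $1\mp z/s_m$, and is preserved under the locally uniform convergence of the product guaranteed by $\sum 1/r_m + \sum 1/s_m < \infty$. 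Consequently each Taylor coefficient $g^{(k)}(0)/k!$ is real; set $d_k := g^{(k)}(0)$.

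Now since $f(z) = g(e^{i\theta}z)$, the chain rule immediately gives
$$f^{(k)}(0) = e^{ik\theta}\, g^{(k)}(0) = d_k e^{ik\theta},$$
which is the first equality in (\ref{eqf}). For the second equality, I observe that $f'(0) = d_1 e^{i\theta}$, so $(f'(0))^k = d_1^k e^{ik\theta}$ and hence $\frac{d_k}{d_1^k}(f'(0))^k = d_k e^{ik\theta} = f^{(k)}(0)$, provided that $d_1 \neq 0$.

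The one step requiring an extra argument is the verification that $d_1 \neq 0$, which is where the hypothesis on existence of a zero or pole enters. Taking the logarithmic derivative of $g$ at the origin yields
$$d_1 = g'(0) = \gamma + \sum_m \frac{1}{r_m} + \sum_m \frac{1}{s_m}.$$
Since $\gamma \geq 0$, $r_m,s_m>0$, and at least one $r_m$ or $s_m$ is present by assumption, the right-hand side is strictly positive. This validates the division by $d_1^k$ and completes the proof. The only mildly delicate point in the whole argument is the passage from real zeros/poles to real Taylor coefficients in the presence of an infinite product, but this is handled by observing that the Schwarz reflection identity is preserved under the locally uniform convergence provided by the summability hypothesis.
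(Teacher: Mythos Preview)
Your proof is correct, and it takes a somewhat different route from the paper's. The paper expands $\log f(z)$ directly as a power series $\sum_{k\geq 1} c(k)\,e^{ik\theta}z^k/k$ with explicit real coefficients $c(k)=\sum_m(-1)^{k+1}/r_m^k+\sum_m 1/s_m^k$ (plus the $\gamma$ term for $k=1$), and then runs an induction via the Leibniz rule applied to $f^{(m+1)}=\big(f\cdot(f'/f)\big)^{(m)}$ to convert this into the statement $f^{(k)}(0)=d_k e^{ik\theta}$ with $d_k$ real. Your approach instead absorbs the phase by passing to $g(z)=f(e^{-i\theta}z)$ and invokes Schwarz reflection to get real Taylor coefficients in one stroke, then recovers the $e^{ik\theta}$ factor by the chain rule. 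Both arguments compute $d_1=\gamma+\sum_m 1/r_m+\sum_m 1/s_m>0$ in the same way. Your version is shorter and more conceptual, while the paper's gives an explicit recursion for the $d_k$ in terms of the $c(k)$, which is not needed for the lemma as stated but makes the structure of the coefficients completely explicit.
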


\begin{proof}
For $z$ small enough we have
\begin{eqnarray*}
\log f(z)& =& \gamma e^{i\theta}z+\sum_{m}\sum_{k=1}^{\infty }\frac{(-1)^{k+1}}{k}\left( \frac{%
e^{i\theta }z}{r_{m}}\right) ^{k}+\sum_{m}\sum_{k=1}^{\infty }\frac{1}{k}%
\left( \frac{e^{i\theta }z}{s_{m}}\right) ^{k}\\ 
\\
&=& \gamma e^{i\theta}z+\sum_{k=1}^{\infty }\left( \sum_{m}\frac{(-1)^{k+1}}{r_{m}^{k}}+\sum_{m}%
\frac{1}{s_{m}^{k}}\right) \frac{e^{ik\theta }}{k}z^{k}\\
\\
&=&\sum_{k=1}^{\infty }c(k)\frac{e^{ik\theta }}{k}z^{k},\\
\end{eqnarray*}
where 
\[
\displaystyle c(1)=\gamma +\sum_{m}\frac{1}{r_{m}}+\sum_{m}\frac{1}{s_{m}}
\]%
and for $k\geq 2$ 
\[
\displaystyle c(k)=\sum_{m}\frac{(-1)^{k+1}}{r_{m}^{k}}+\sum_{m}\frac{1}{s_{m}^{k}}. 
\]%
This tells us that for $k\geq 1$ 
$$
D^{k}\log f(z)|_{z=0}=\frac{k!}{k}c(k)e^{ik\theta }$$
and so for $k\geq 0$
$$D^{k}\frac{f^{\prime }}{f}(0)=k!c(k+1)e^{i(k+1)\theta}. $$
Since $f(0)=1$ and $\displaystyle c(1)=\gamma+\sum_{m}\frac{1}{r_{m}}+\sum_{m}\frac{1}{s_{m}}>0$, and
since we have at least one zero or pole, we have 
$$f^{\prime}(0)=c(1)e^{i\theta }\neq 0.$$ We set $d_{0}=1$, $d_{1}=c(1) \in \R$. We now proceed by
induction. Having $\displaystyle f^{(k)}(0)=d_{k}e^{ik\theta }$ with $d_{k} \in \R$ for $%
0\leq k\leq m$, we have 
\begin{eqnarray*}
f^{(m+1)}(0)&=&\sum_{k=0}^{m}{m \choose k}D^{k}\frac{f^{\prime }}{f}(0)D^{m-k}f(0)\\
\\
&=&\sum_{k=0}^{m}{m \choose k}k!c(k+1)e^{i(k+1)\theta
}d_{m-k}e^{i(m-k)\theta }\\
\\
&=&\left( \sum_{k=0}^{m}{m \choose k}k!c(k+1)d_{m-k}\right) e^{i(m+1)\theta },
\end{eqnarray*}
which proves the first equality in (\ref{eqf}). The second equality follows directly. 
\end{proof}

\begin{lem}\label{lemexp}
Let $F$ be a meromorphic function on $\C^n$ with $F(0)=1$. Assume that its star function $T^*(.,F)$ is harmonic on $\mathbb{H}$. For any integer $k>0$  let $P_k$ be the polynomial 
giving the $k$-homogeneous part of the Taylor expansion of $F$ at the point $0$. 
Then there exists a sequence $\{c_k\}_{k\geq 2}$ of real numbers such that 
$$P_k=c_k (P_1)^k$$ for all $k\geq 2.$ 
\end{lem}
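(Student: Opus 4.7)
My approach is to use the one-variable characterization of meromorphic functions with harmonic star function on each complex line through the origin, and then globalize via the identity principle for holomorphic functions on $\C^n$.

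First, by the remark following Proposition \ref{propsub}, the hypothesis gives that $T^*(\cdot,F_\zeta)$ is harmonic on $\mathbb{H}$ for almost every $\zeta \in \mathbb{S}^{2n-1}$. Since $F_\zeta(0)=1$, the one-variable characterization from \cite{ab} produces $\theta(\zeta)\in\R$ and a function $P_\zeta$ of the form (\ref{eqp}) with $F_\zeta(z)=P_\zeta(e^{i\theta(\zeta)}z)$. Writing the homogeneous Taylor expansion $F=\sum_k P_k$ gives $F_\zeta^{(k)}(0)=k!\,P_k(\zeta)$. Applying Lemma \ref{lemcoef} (when $F_\zeta$ has at least one zero or pole) or computing directly (when $F_\zeta$ is a pure exponential, in which case $F_\zeta^{(k)}(0)=(F_\zeta'(0))^k$) yields, for almost every $\zeta$ with $P_1(\zeta)\neq 0$,
\begin{equation*}
k!\,P_k(\zeta)=\mu_k(\zeta)\,(P_1(\zeta))^k,\qquad \mu_k(\zeta)\in \R.
\end{equation*}

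Now assume first that $P_1\not\equiv 0$. The ratio $g_k:=P_k/P_1^k$ is then a holomorphic function on the connected open set $U=\{Z\in\C^n\mid P_1(Z)\neq 0\}$. Because $g_k$ is $\C^*$-invariant (degree-zero homogeneous), the a.e.\ realness on the sphere propagates along real rays to a full-measure subset of $U$; consequently the real-analytic function $\Im g_k$ vanishes on a positive-measure set in $U$ and therefore vanishes identically. Thus $g_k$ is holomorphic and real-valued on the connected open set $U$, forcing it to be a real constant $c_k$ by the open mapping theorem. This yields $P_k=c_k P_1^k$ on all of $\C^n$.

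If instead $P_1\equiv 0$, then $F_\zeta'(0)=0$ for every $\zeta$; but the derivative at $0$ of any $P$ of the form (\ref{eqp}) equals the nonnegative quantity $\gamma+\sum_m 1/r_m+\sum_m 1/s_m$, which vanishes only when $P\equiv 1$. Hence $F_\zeta\equiv 1$ for almost every $\zeta$, and the meromorphicity of $F$ forces $F\equiv 1$ on $\C^n$, so all $P_k$ vanish for $k\geq 1$ and the conclusion holds trivially.

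The main obstacle is the globalization step: promoting the a.e.\ realness of $g_k$ on the $(2n-1)$-dimensional sphere to an identity on the open set $U$. The scale-invariance of $g_k$ is essential here, as it is what allows the real sphere condition to cover a genuinely open subset of $\C^n$ and thus trigger the real-analytic identity theorem.
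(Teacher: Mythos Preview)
Your proof is correct and follows essentially the same route as the paper: reduce to the one-variable result of \cite{ab} on almost every slice, invoke Lemma~\ref{lemcoef} to obtain that $P_k(\zeta)/(P_1(\zeta))^k$ is real for a.e.\ $\zeta$, and conclude that this homogeneous-of-degree-zero meromorphic function is a real constant. Your treatment of the globalization step (scale-invariance plus the real-analytic identity principle) and of the degenerate cases ($F_\zeta$ a pure exponential, $P_1\equiv 0$) is in fact more explicit than the paper's, which compresses the final step into the single sentence ``meromorphic and real valued, hence constant.''
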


\begin{proof}
Since $T^*(.,F)$ is harmonic on $\mathbb{H}$, then by the definition of $T^*(.,F)$ and the proof of Proposition \ref{propsub}, for a.e. $\zeta \in \mathbb{S}^{2n-1}$, $T^*(.,F_\zeta)$ 
is harmonic on $\mathbb{H}$. Thus by Theorem 1 in \cite{ab}, for a.e. $\zeta \in \mathbb{S}^{2n-1}$, $F_\zeta$ has the form (\ref{eqformf}) in Lemma \ref{lemcoef}. So we have
$$F_\zeta^{(k)}(0)=d_{k}(\zeta)e^{ik\theta _{\zeta }},$$
with $d_{k}(\zeta) \in \R$ for all $k\geq 0$ for a.e. $\zeta \in \mathbb{S}^{2n-1}$, and thus for all $\zeta \in \mathbb{S}^{2n-1}$ by continuity of $d_k(\zeta)$. 
For $z\in \C$ and $\alpha=(\alpha_1,\cdots,\alpha_n) \in \C^n\setminus\{0\}$,  define 
$$F_{\alpha}(z)=F(\alpha_1 z,\cdots,\alpha_n z).$$ Since 
$\displaystyle F_{\alpha}(z)=F_{\frac{\alpha}{\|\alpha\|}}(\|\alpha\|z)$, the function $F_{\alpha}$ has the form 
(\ref{eqformf}) of Lemma \ref{lemcoef} and so $F_{\alpha}^{(k)}(0)$ is a real
multiple of $(F_{\alpha}^{\prime }(0))^{k}$. In particular, note that if  $\alpha \mapsto F_{\alpha}^{\prime }(0)$ is identically equal to  zero then $F$ must be identically equal to $1$.  
The homogeneous polynomial $P_k$ is given by  
$$P_k(Z)=\sum_{|J|=k}\frac{\partial_J F(0)}{J!} Z^J
,$$
where, for a multiindex $J=(j_1,\cdots, j_n) \in \N^n$, we write 
$J!=j_1!\cdots,j_n!$, $Z^J=z_1^{j_1}z_2^{j_2}\cdots z_n^{j_n}$ and  
$\displaystyle \partial_{J}F(0)=\frac{\partial^{j_1}}{\partial z_1^{j_1}}\cdots \frac{\partial^{j_n}}{\partial z_1^{j_n}} F(0).$
Therefore, we have 
\begin{eqnarray*}
F_\alpha(z)&=&\sum_k P_k(z\alpha)\\
\\
&=&\sum_k \sum_{|J|=k}\frac{\partial_J F(0)}{J!} (z\alpha)^J\\
\\
&=&\sum_k \left(\sum_{|J|=k}\frac{\partial_J F(0)}{J!} \alpha^J\right)z^k.\\
\end{eqnarray*}
It follows that
$$F_{\alpha}^{(k)}(0)=\displaystyle \sum_{|J|=k} \frac{k!}{J!}\partial_{J}F(0)\alpha^J=k!P_k(\alpha).$$
Since the function $\displaystyle \alpha \mapsto \frac{P_k(\alpha)}{(P_1(\alpha))^k} = \frac{F_{\alpha}^{(k)}(0)}{k!(F_{\alpha}^{\prime }(0))^{k}}$ is meromorphic and real valued, it is constant. This concludes the proof of Lemma \ref{lemexp}.
\end{proof}
\begin{rem}\label{remlem}
It follows from  the proof of Lemma \ref{lemexp} that it is enough to assume the following: there exists a subset $B \subset \mathbb{S}^{2n-1}$ of positive measure such that for all $\zeta \in B$, $T^*(.,F_\zeta)$ is harmonic on $\mathbb{H}$. Indeed this implies that the meromorphic function $\displaystyle \alpha \mapsto \frac{P_k(\alpha)}{(P_1(\alpha))^k}$ is  real valued for all $\alpha \in \C^n$ such that $\frac{\alpha}{\|\alpha\|} \in B$, which is enough to show that it is constant. 
\end{rem}
We are now able to prove Theorem \ref{theohar}.
\begin{proof}[Proof of Theorem \ref{theohar}]
Let $F$ be a meromorphic function on $\mathbb C^n$ sastifying $F(0)=1$. For any $k\in \mathbb N$ let $P_k$ be the polynomial 
giving the $k$-homogeneous part of the Taylor expansion of $F$ at the point $0$; in particular, the Taylor 
series of $F$ is given by $\sum_{k\in \mathbb N} P_k$.  If furthermore $F$ is such that $T^*(.,F)$ is harmonic on 
$\mathbb{H}$, then by Lemma \ref{lemexp}, there exists a sequence $\dis \{c_k\}_{k\geq 2}$ of real numbers such that 
\begin{equation}\label{eqpol}
P_k=c_k (P_1)^k
\end{equation}
 for all $k\geq 2$. 
For $j=1,\ldots,n$ define $\displaystyle \eta_j=\frac{\partial F}{\partial z_j}(0)$, so that $\dis P_1(Z)=\sum_{j=1}^n \eta_j z_j,$ and set $\eta=(\eta_1,\ldots,\eta_n)$. 

Suppose first that $\eta=0$. Then $P_1 = 0$, and by (\ref{eqpol}) we also have $P_k=0$ for all $k\geq 2$. It follows that $F$ is identically equal to $1$, and defining $P(z)\equiv 1$ we get $P(Z\cdot \eta)=P(0)=1=F(Z)$ for all $Z\in \mathbb C^n$.

Suppose then that $\eta\neq 0$. Without loss of generality, up to a permutation of the coordinates we can assume that $\eta_1\neq 0$. We define a meromorphic function $P$ as 
$$P(z)=F\left(\frac{z}{\eta_1},0,\ldots,0\right).$$ We need to show that $F(Z)=P(Z\cdot \eta)$ for all $Z\in \mathbb C^n$. We check this identity by verifying that the Taylor expansions of the two functions about $0$ coincide. On the one hand, 
using Lemma \ref{lemexp}, we have the polynomial identity
\[\sum_{|J|=k}\frac{\partial_J F}{J!}(0) Z^J=c_k\left(\sum_{j=1}^n \eta_jz_j\right)^k=c_k\sum_{|J|=k}\frac{k!}{J!}\eta^JZ^J\]
which implies that 
$$\partial_J F(0)=k!c_k\eta^J.$$ 
Using now the chain rule iteratively and denoting $G(Z)=P(Z\cdot \eta)$, for any multiindex $J=(j_1,\ldots,j_n)\in \mathbb N^n$ with $|J|=k\geq 2$ we have
\[\dis \partial_JG(0)=P^{(k)}(0)\eta_1^{j_1}\eta_2^{j_2}\cdots\eta_n^{j_n}=P^{(k)}(0)\eta^J.\]
To conclude the proof we just need to check that  $P^{(k)}(0)=k!c_k$.
First, we observe that $P'(0)=1$ and 
\[P^{(k)}(0)=\frac{1}{\eta_1^k}\frac{\partial^k F}{\partial z_1^k}(0)\] 
for all integer $k\geq 0$ by the chain rule. Using Lemma \ref{lemexp} again, we can write
\[\frac{1}{k!{\eta_1^k}}\frac{\partial^k F}{\partial z_1^k}(0)z^k= P_k\left(\frac{z}{\eta_1},0,\ldots,0\right)=c_k \left(P_1\left(\frac{z}{\eta_1},0,\ldots,0\right)\right)^k=c_k z^k.\]
Hence $$P^{(k)}(0)=k!c_k$$ for all integers $k\geq 2$. It follows that $\partial_J F(0)=\partial_J G(0)$ for all $J\in \mathbb N^n$, hence $F(Z)\equiv G(Z)=P(Z\cdot\eta)$.

Finally, if $\zeta\cdot \eta\neq 0$ and   $T^*(.,F_{\zeta})$ is harmonic then $T^*(.,P(z\zeta\cdot \eta))$, and hence $T^*(.,P)$, is 
harmonic. It follows by \cite{ab} that $P$ has the required form.     
\end{proof}

\begin{rem} According to the proof of Theorem \ref{theohar} and Remark \ref{remlem}, if we replace the harmonicity of $T^*(.,F)$ by the weaker statement that 
 there exists a subset $B \subset \mathbb{S}^{2n-1}$ of positive measure such that for all $\zeta \in B$, $T^*(.,F_\zeta)$ is harmonic on $\mathbb{H}$, the theorem still holds.
\end{rem}

\begin{example} It is not enough to assume that $T^*(.,F_\zeta)$ is harmonic on $\mathbb{H}$ for finitely many $\zeta \in \mathbb{S}^{2n-1}$. For instance in
 $\C^2$ consider $\zeta_1=(\alpha_1,\beta_1), \cdots,\zeta_N=(\alpha_1,\beta_1) \in \mathbb{S}^3$ and define, for $j=1,\cdots,N$, the linear function 
 $L_j:\C^2 \to \C$ $L_j(z_1,z_2)=\beta_jz_1-\alpha_jz_2.$ Then the function $F:\C^2 \to \C$ defined by 
$$F(z_1,z_2)=\prod_{j=1}^N L_j(z_1,z_2)+1,$$
is meromorphic with $F(0)=1$. Moreover $F_{\zeta_j}\equiv 1$ hence $T^*(.,F_{\zeta_j})$ is harmonic for $j=1,\cdots,N$ but $F$ is not of the form of Theorem 
\ref{theohar}.   
\end{example}

\begin{example} Note that if $T^*(.,F_\zeta)$ is harmonic on $\mathbb{H}$ then for a.e. $\zeta \in \mathbb S^{2n-1}$ then the zeros of $F_\zeta$ all belong to the 
same ray. In Theorem \ref{theohar}, it is not enough to assume that for a.e.   $\zeta \in \mathbb{S}^{2n-1}$ the zeros of $F_\zeta$ just belong to the same line. For instance consider any polynomial $P:\C\to \C$ with $P(0)=1$ and with zeros on the same line but not on the same ray. Let $\eta \in \C^n\setminus\{0\}$. Then the function 
$F(Z)=P(Z\cdot \eta)$ is such that the zeros of $F_\zeta$ for all $\zeta \in \mathbb S^{2n-1}$ are on the same line but is not of the form of Theorem \ref{theohar}.
\end{example}

\vskip 0,3cm
{\small
\noindent Faruk Abi-Khuzam\\
Department of Mathematics, American University of Beirut, Beirut, Lebanon\\
{\sl E-mail address}: farukakh@aub.edu.lb
\\

\noindent Florian Bertrand\\
Department of Mathematics, American University of Beirut, Beirut, Lebanon\\
{\sl E-mail address}: fb31@aub.edu.lb\\

\noindent Giuseppe Della Sala \\
Department of Mathematics, American University of Beirut, Beirut, Lebanon\\
{\sl E-mail address}: 	gd16@aub.edu.lb\\
}

\end{document}